
\documentclass[final]{IEEEtran}
\IEEEoverridecommandlockouts   
%\overrideIEEEmargins

\IEEEoverridecommandlockouts                              % This command is only
                                                          % needed if you want \usepackage[utf8]{inputenc}

\usepackage{amsmath,amssymb,mathtools}
\usepackage{amsthm,thmtools}
\usepackage{amsfonts} 

\usepackage{floatrow}%
\usepackage{enumerate}
\usepackage{color}
\usepackage{verbatim}
\usepackage{amssymb}
\usepackage{amsbsy}
\usepackage{amsmath}
\usepackage{setspace}
\usepackage{url}
\usepackage{todonotes}
\usepackage{bbm}

\newcommand{\diag}{\operatorname{diag}}

\newcommand{\st}{\, | \,}

\newcommand{\trace}{\operatorname{trace}}

\declaretheorem[qed=$\blacksquare$ ]{Example}

\declaretheorem[name={Theorem}  ] {Theorem}
\declaretheorem[name={Lemma}  ] {Lemma}
\declaretheorem[name={Remark}  ] {Remark}

\declaretheorem[name={Proposition}  ] {Proposition}

\newcommand {\R}{\mathbb R}

\usepackage{hyperref}       % hyperlinks
\hypersetup{colorlinks=true, linkcolor=blue, breaklinks=true, urlcolor=blue}

\DeclareMathAlphabet{\mymathbb}{U}{BOONDOX-ds}{m}{n}

\newcommand{\be}{\begin{equation}}
\newcommand{\ee}{\end{equation}}

\newcommand{\osimplex}{\mathring{\Delta}}

%%%%%%%%%%%%%%%%%%%%%
\title{
Minimum effort decentralized control design \\ for contracting network systems}
\author{Ron Ofir, Francesco Bullo, and Michael Margaliot\thanks{RO is with the Andrew and Erna Viterbi Faculty of Electrical and Computers Eng., Technion---Israel Institute of Technology, Haifa 3200003, Israel.
FB is with the Dept. of Mechanical Eng. and the Center for Control, Dynamical Systems, and Computation, University of California, Santa Barbara, 93106-5070, USA.
MM (michaelm@tauex.tau.ac.il) is with the School of Elc. Eng. - Systems, Tel Aviv University, 69978, Israel.
The research of MM is partially supported by research grants from the~ISF and the~DFG. The work of FB was supported in part by AFOSR grant FA9550-22-1-0059.} }
\date{Mar. 2022}

\begin{document}
\maketitle
\begin{abstract}
%%%%
  We consider the problem of making a networked system contracting by
  designing ``minimal effort'' local controllers.  Our method combines a
  hierarchical contraction characterization and a matrix-balancing approach
  to stabilizing a Metzler matrix via minimal diagonal perturbations.  We
  demonstrate our approach by designing local controllers that render
  contractive a network of
  FitzHugh–Nagumo 
 neurons with a general topology
  of interactions.
%%%%%
\end{abstract}

\begin{IEEEkeywords}
Large-scale systems, matrix balancing, hierarchical contraction,  matrix measures, 
FitzHugh–Nagumo neurons, entrainment.
%%%%%%%%%%%%
\end{IEEEkeywords}

%%%%%%%%%%%%
\section{Introduction}
Many models of systems   consist of smaller  sub-systems which interact with each other over a network. In particular, such models are often of a large scale, in the sense that they are described by a large number of state variables. There is a renewed interest in such systems due to applications in  social dynamics, the power grid, neuroscience, and more. 

In large-scale networked systems, it is desirable to design controllers that stabilize the system using local measurements only, and using as little control effort as possible. For example, in the context of pandemic control, this corresponds to minimizing the use of protective resources~\cite{Preciado2014optimal_resource}, or minimizing the negative effect of lockdowns on the economy~\cite{Ma2022optimal}. Naturally, the control design algorithm must also be computationally efficient. 

One powerful approach for the analysis and control synthesis of large-scale
nonlinear systems is contraction
theory~\cite{LOHMILLER1998683,sontag_cotraction_tutorial}. Contractivity
implies a well-ordered asymptotic behaviour: if the system is
time-invariant and admits an equilibrium then the equilibrium is globally
exponentially stable (see, e.g.~\cite{sontag_cotraction_tutorial}). If the
system is time-varying and~$T$-periodic then contraction implies
entrainment, that is, every state variable in the network converges to a unique $T$-periodic solution (see,
e.g.~\cite{LOHMILLER1998683,entrain2011,RFM_entrain}). This property is
important in many natural and artificial systems ranging from power
electronics to systems biology. Furthermore, there exist easy to check
sufficient conditions for contraction of networked systems based on matrix
measures~\cite{Russo2013hier_contraction}.

The main contribution of this paper is a new approach for 
the computationally efficient design of ``minimum-effort'' local controllers 
guaranteeing that the closed-loop network system is 
contracting with a specified contraction rate. 
% The approach provides an optimal solution in some cases.  
We demonstrate our approach by designing the control in a network of FitzHugh–Nagumo~(FHN) neurons, with a general interaction topology, so that the network is contractive. Our approach provides conditions guaranteeing that the local controllers make the closed-loop network contractive and thus guaranteeing entrainment to periodic excitations. 
This property plays an important role in a multitude of sensory and cognitive processes~\cite{LAKATOS2019R890}.

Our approach brings together in a creative new way two recent results. The first is a sufficient condition for contraction of a nonlinear networked system, that appeared  in~\cite{Davydov2021noneuclidean} in the context of contraction with respect to norms induced by weak pairings. This sufficient condition turns the question of contraction to that of checking whether a certain Metzler matrix is Hurwitz. The second result is  a method for finding the minimal diagonal perturbation required to stabilize a Metzler matrix, presented in~\cite{Ma2022optimal} in the context of optimal lockdown design for controlling pandemics. This method is based on an elegant reduction of the optimization problem to a matrix balancing problem that can be solved using efficient algorithms. As this result is very recent, we provide here a self-contained review. 

We use the following notation. $\R_{\ge0}^n$ [$\R_{>0}^n$] is the subset of vectors in~$\R^n$ with non-negative [positive] entries. For~$A \in \R^{n \times n}$, $\alpha(A)$ denotes the spectral abscissa of~$A$, i.e. the maximal real part of the eigenvalues of~$A$. 
A matrix~$M\in\R^{n\times n}$ is called [marginally] Hurwitz if~$\alpha(M)<0$ [$\alpha(M)=0$]. 
%%%%%%%
A matrix~$M\in\R^{n\times n}$ is called Metzler if all its off-diagonal entries are non-negative. This is equivalent to the fact that the flow of~$\dot x=Mx$ maps~$\R_{\geq 0 }^n$ to itself i.e., the linear system is positive.  
Dynamical systems that admit an invariant cone are called positive or monotone, and it is well-known that for such systems stability analysis and control synthesis tend to scale well with the system dimension~\cite{RANTZER201572}. 
For~$A,B\in\R^{n\times m}$ we write~$A \leq B$ if~$a_{ij} \leq b_{ij}$ for all~$i,j$. Let~$\mathbbm{1}_n \in\R^n$ denote the vector with~$n$ entries equal to~$1$.

%%%%%%%%%%%%%%
\section{Problem formulation}
%%%%%%%%%%%%%%
Consider a networked system consisting of $m$ time-varying subsystems 
\begin{equation}\label{eq:network}
    \dot x^i(t) = f^i(t,x^1(t),\dots,x^m(t)) - u_i(t) x^i(t),\quad i=1,\dots,m, 
\end{equation}
where $x^i \in \Omega^i \subseteq \R^{n_i}$ and $u_i \in \R$. Note that~$u_ix^i$ may be interpreted as a local controller in subsystem~$i$, with  a stabilizing effect when~$u_i$ is positive.

We assume that~$f^i$ is continuously differentiable, and that~$\Omega^i$ is convex for any~$i\in\{1,\dots,n\}$. Let $n := \sum_{i=1}^m n_i$, $\Omega := \Omega^1 \times \dots \times \Omega^m$, and 
\[
    x := \begin{bmatrix}
        x^1 \\
        \vdots \\
        x^m
    \end{bmatrix}. 
\]
Then $x \in \Omega \subseteq \R^n$. Let $u := \begin{bmatrix}u_1 & \dots & u_m \end{bmatrix}^T  $, and denote 
$
    \delta_{ij}:=\begin{cases} 1, &i=j,\\
                                0, & i\not =j.
\end{cases}
$
  
The derivative of the vector field
in~\eqref{eq:network} with respect to~$x^j$
is
\be\label{eq:jij}
    J^{ij}(t,x,u) := \frac{\partial f^i}{\partial x^j}(t,x) - \delta_{ij} u_i(t) I_{n_i}.
\ee
Let
\begin{equation}\label{eq:net_jacobian}
    J(t,x,u) := \begin{bmatrix}
        J^{11}(t,x,u) & \cdots & J^{1m}(t,x,u) \\
        \vdots & \ddots & \\
        J^{m1}(t,x,u) & & J^{mm}(t,x,u)
    \end{bmatrix}.
\end{equation}
Fix~$\eta>0$. If follows from~\eqref{eq:jij} that if 
all the~$\frac{\partial f^i}{\partial x^j} $s are uniformly bounded 
then the overall system can be made contracting with rate~$\eta$ by 
setting~$u_i(t)\equiv  c$, $i=1,\dots,m$, with~$c>0$ sufficiently large. 
This naturally yields the question of how to find a ``minimum effort control'' 
that guarantees that the networked system is contracting with rate~$\eta$.

We formalize this question by posing 
the following optimization problem. 
Given~$\eta>0$, a weight vector~$w \in \R_{>0}^m$,  and a matrix measure~$\mu:\R^{n\times n} \to \R$, consider the problem 
\begin{equation}\label{eq:opt_contract}
\begin{aligned}
%%%%    \min_{\substack{u \in \R_{\ge0}^m\\|\cdot| : \R^s \to \R_+}} \quad & w^T u, \\
    \min_{v \in \R_{>0}^m } \quad & w^T v, \\
    \mathrm{s.t.} \quad & \mu(J(t,x,v)) \le -\eta \text{ for all } t \ge 0,\; x \in \Omega . 
\end{aligned}
\end{equation}
 In other words, the goal is  to find constant controls~$u_i(t)\equiv v_i$, $i=1,\dots,m$,  
guaranteeing  that the network system is contractive with rate~$\eta$, while minimizing the ``total cost'' $w^T v$. 
In particular, by setting~$w_i\gg w_j$ for all~$j \not =i$,   we can try to find a solution that guarantees a small control 
effort~$u_i(t)\equiv v_i$ in the~$i$th controller, 
if such a solution exists.

The optimization problem~\eqref{eq:opt_contract} 
is difficult to address directly because the  constraint on~$\mu(J)$
has to hold everywhere in the state-space and for all time. Furthermore, the matrix measure~$\mu$ is itself a decision variable of the problem, and it is not clear how to choose a ``good''~$\mu$. 

The  approach we propose here 
overcomes these difficulties by: (1)~replacing the constraint by a stronger condition which only requires that a certain \emph{constant} 
Metzler matrix is (marginally) stable. This removes the need to study the Jacobian directly, and essentially makes the choice of matrix measure implicit; and (2) efficiently solving  
the resulting optimization problem   using matrix balancing. 

The remainder of this note is organized as follows. The next section reviews several known definitions and results that are used later on. Section~\ref{sec:main}
describes our main results. Section~\ref{sec:appli} demonstrates  an application of our  theoretical   results to   a network of FHN neurons, and the final section concludes.

\section{Preliminaries}
%%%%%%%%%%%%%%%%%%%%%%%%%%%%%%%%%%%%%%%%%%%%%%%%%%%%%%%%%%%
We first review   known results that will be used later on. 
%%%%%%%%%%%%%%%%
\subsection{Sufficient condition for contraction in networked systems}
%%%%%%%%%%%%%%%%%%%%%%%
We briefly review a result by Str{\"o}m~\cite{Strom1975} which gives an upper bound for the matrix measure of a block matrix~$A$ based on the matrix measure of a smaller matrix $B$, where each entry of $B$ corresponds to a single block of~$A$. Given~$x\in\R^n$, decompose it as 
\begin{equation}
    x = \begin{bmatrix}
        x^1 \\
        \vdots \\
        x^m
    \end{bmatrix},\;\;x^i \in \R^{n_i},\;\;\sum_{i=1}^m n_i = n.
\end{equation}
Let $|\cdot|_i$ denote a norm on $\R^{n_i}$, and let~$|\cdot|_0$ denote a \emph{monotonic} norm\footnote{A vector norm~$|\cdot|:\R^n\to\R_{\ge0}$ is called  monotonic if~$|y_i| \leq |x_i|$ for all~$i = 1,\dots,n$ implies that~$|y| \leq |x|$; see~\cite{Bauer1961_mono_norms} for more details.} on~$\R^m$. Define a norm~$|\cdot|:\R^n \to \R_+$ by
\begin{equation}\label{eq:hier_norm}
    |x| := \left|
    \begin{bmatrix}
    |x^1|_1 \\ \vdots \\ |x^m|_m
    \end{bmatrix}
    \right|_0.
\end{equation}
Given $A \in \R^{n \times n}$, partition it into blocks $A^{ij} \in \R^{n_i \times n_j}$, with~$i,j\in\{1,\dots,m\} $, and define their induced matrix norms by
$
    \|A^{ij}\|_{ij} := \sup_{ z \in \R^{n_j} \setminus\{0\} }  |A^{ij} z|_i / |z|_j.
$
%%%%%%%%%%%%%%%%%%%%%%%
\begin{Theorem}{\cite{Strom1975}} \label{thm:strom}
%%%%%
Let~$\mu$ denote the matrix measure induced by the norm~$|\cdot|$ defined in~\eqref{eq:hier_norm}. 
Let~$\mu_i$ denote the matrix measure induced by $|\cdot|_i$, $i = 0,\dots,m$.
Define $B \in \R^{m \times m}$ by
\[
    B_{ij} := \begin{cases}
        \mu_i(A^{ii}), & i=j, \\
        \|A^{ij}\|_{ij}, & i \neq j.
    \end{cases}
\]
   Then
$        \mu(A) \le \mu_0(B)$.
\end{Theorem}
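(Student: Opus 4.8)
The plan is to estimate the matrix measure~$\mu(A)$ directly from its definition as the one-sided directional derivative of the norm, $\mu(A) = \lim_{h \to 0^+} \bigl(\|I + hA\| - 1\bigr)/h$, and to reduce the computation on~$\R^n$ to the computation of~$\mu_0$ on~$\R^m$ by pushing the block structure through the hierarchical norm~\eqref{eq:hier_norm}. First I would fix~$x \in \R^n$ with~$|x| = 1$, decompose it into blocks~$x^1,\dots,x^m$, and consider~$y := (I + hA)x$ for small~$h > 0$; its $i$th block is~$y^i = x^i + h\sum_{j=1}^m A^{ij} x^j = (I_{n_i} + hA^{ii})x^i + h\sum_{j \neq i} A^{ij} x^j$.

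The key step is to bound~$|y^i|_i$. By the triangle inequality on~$\R^{n_i}$ and the definition of the induced norms~$\|A^{ij}\|_{ij}$,
\begin{equation*}
    |y^i|_i \le |(I_{n_i} + hA^{ii})x^i|_i + h\sum_{j \neq i} \|A^{ij}\|_{ij}\, |x^j|_j .
\end{equation*}
Now I would invoke the standard scalar fact that for the norm~$|\cdot|_i$, $|(I_{n_i} + hA^{ii})x^i|_i \le (1 + h\mu_i(A^{ii}) + o(h))\,|x^i|_i$ as~$h \to 0^+$ — this is exactly what the matrix measure controls, uniformly over unit vectors (and hence, by homogeneity, for all~$x^i$). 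Substituting and collecting terms, the vector~$\xi(h) \in \R^m$ with entries~$|y^i|_i$ satisfies, componentwise,
\begin{equation*}
    \xi(h) \le \bigl(I_m + hB + o(h)\mathbbm{1}_m\mathbbm{1}_m^T\bigr)\, \xi(0),
\end{equation*}
where~$\xi(0)$ is the vector with entries~$|x^i|_i$, which has~$|\xi(0)|_0 = |x| = 1$. Here the off-diagonal entries of~$B$ come from the cross terms and the diagonal entries from~$\mu_i(A^{ii})$, matching the definition of~$B$ in the statement.

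Finally I would apply monotonicity of~$|\cdot|_0$: since~$I_m + hB$ has nonnegative off-diagonal entries for small~$h$ (the~$B_{ij}$, $i \neq j$, are norms, hence~$\ge 0$) and~$\xi(0) \ge 0$, the bound~$\xi(h) \le (I_m + hB + o(h)J)\xi(0)$ between nonnegative vectors is preserved by~$|\cdot|_0$, giving~$|y| = |\xi(h)|_0 \le \|I_m + hB\|_0\,|\xi(0)|_0 + o(h) = \|I_m + hB\|_0 + o(h)$. Taking the sup over unit~$x$, then subtracting~$1$, dividing by~$h$, and letting~$h \to 0^+$ yields~$\mu(A) \le \mu_0(B)$.

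The main obstacle — and the place where care is needed — is the uniformity of the little-$o$ terms: I must ensure the estimate~$|(I_{n_i} + hA^{ii})x^i|_i \le (1 + h\mu_i(A^{ii}))|x^i|_i + o(h)$ holds with an~$o(h)$ that is uniform over all blocks~$x^i$ arising from unit vectors~$x$, and likewise that monotonicity genuinely lets me discard the~$o(h)$ perturbation after applying~$|\cdot|_0$. Both are routine given that we work on finite-dimensional spaces with fixed norms, but they are the steps that make the limit argument rigorous rather than merely formal; one clean way to sidestep subtleties is to first prove the componentwise inequality~$\xi(h) \le (I_m + hB)\xi(0) + o(h)\mathbbm{1}_m$ and only then apply the monotonic norm, rather than trying to interleave the two limits.
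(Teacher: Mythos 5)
Your proposal is correct. Note that the paper does not actually prove Theorem~\ref{thm:strom} --- it is stated as a cited result from Str\"om's 1975 paper --- and your argument is essentially the classical one from that reference: bound $|(I+hA)x|$ blockwise via the triangle inequality and the induced norms, pass the resulting componentwise estimate $\xi(h) \le (I_m + hB)\xi(0) + o(h)\mathbbm{1}_m$ through the monotonic norm $|\cdot|_0$, and take the limit $h \to 0^+$. The two points you flag (uniformity of the $o(h)$ over unit vectors $x$, and nonnegativity of both sides before invoking monotonicity, which holds since $I_m + hB$ is a nonnegative matrix for small $h>0$) are exactly the right ones to check, and both go through in finite dimensions as you say.
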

Thus, if~$\mu_0(B)\leq -\eta<0$ then~$\mu(A)\leq -\eta<0$. Note   that~$B$ is    Metzler   by construction.

\subsection{Matrix balancing}
%%%%%%%%%%%%%%%%%
A non-negative matrix $A \in \R^{n \times n}_{\geq 0}$ is  called  \emph{balanced} (some authors use the term  \emph{sum-symmetric}~\cite{Bapat1997nonnegative}) if 
$
    A \mathbbm{1}_n = A^T \mathbbm{1}_n.
$
In other words, the sum of the entries in row~$i$ of~$A$ is
equal to the sum of entries in column~$i$ of~$A$, for all~$i=1,\dots,n$. 
For example, every symmetric matrix is balanced. Also, every doubly stochastic matrix is balanced, as the sum of every row and every column is one.
 
A Metzler matrix~$A\in\R^{n\times n} $ is said to be \emph{balancable via diagonal similarity scaling} (BDSS) if there exists a diagonal matrix~$D \in \R^{n \times n}$, with positive diagonal entries, such that~$D^{-1} A D$ is balanced. The following result from~\cite{Kalantari1997balancing}
presents a sufficient condition for~BDSS, and shows that balancing is equivalent to solving an optimization problem. 
%This formulation as an optimization problem will be important when we relate balancing to the question of stabilizing Metzler matrices using minimal effort. 
Balancing is typically presented for non-negative matrices.
We state this result in the slightly more general setting of Metzler matrices. The proof   is   in the appendix.
%%%%%%%%%%%%%%%%%%%%%%%%%%%%%%%%
\begin{Theorem}[Balancing Theorem]\label{thm:diag_balance} \cite{Kalantari1997balancing}
%%%%%%%%%%%%%%%%%%%%%%%%%%%%%%%%%%%%%%%%%%%%%%%%%%%%
    Let~$A \in \R^{n \times n}$ be Metzler and irreducible. Define~$f : \R_{>0}^n \to \R$ by
    \begin{equation}\label{eq:def_fd}
        f(d) := \mathbbm{1}_n^T (\diag(d))^{-1} A \diag(d) \mathbbm{1}_n , 
    \end{equation}
    and consider the optimization problem
     \begin{equation}\label{eq:minfd}
        \min_{d \in \R_{>0}^n} f(d).
    \end{equation}
    Then:
    \begin{enumerate}
        \item There exists a $d^* \in \R_{>0}^n$ 
         that
         is a solution of~\eqref{eq:minfd};
        \item $A$ is~BDSS and in particular~$(\diag(d^*))^{-1} A \diag(d^*)$ is balanced; and
        \item If $\bar d, d^* \in \R_{>0}^n$ are solutions of~\eqref{eq:minfd}, then $\bar d = c d^*$ for some~$c > 0$.
    \end{enumerate}
\end{Theorem}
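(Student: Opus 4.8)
The plan is to reduce \eqref{eq:minfd} to the minimization of a convex function over a hyperplane, where existence, the balancing property, and uniqueness up to scaling all become transparent. Writing $D := \diag(d)$ and $B := D^{-1} A D$ one has $B_{ii} = a_{ii}$, so
\[
  f(d) = \mathbbm{1}_n^T B\, \mathbbm{1}_n = \trace(A) + \sum_{i \neq j} a_{ij}\, \frac{d_j}{d_i},
\]
and since $A$ is Metzler every coefficient $a_{ij}$ with $i\neq j$ is nonnegative. The additive constant $\trace(A)$ is irrelevant, so it suffices to minimize $g(d) := \sum_{i\neq j} a_{ij}\, d_j/d_i$ over $\R_{>0}^n$. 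The substitution $d_i = e^{y_i}$ turns $g$ into
\[
  \tilde g(y) := \sum_{i\neq j} a_{ij}\, e^{y_j - y_i},
\]
a nonnegative combination of the convex functions $y \mapsto e^{y_j - y_i}$, hence convex on $\R^n$, and $\tilde g(y + c\,\mathbbm{1}_n) = \tilde g(y)$ for every $c\in\R$. So it is enough to study $\tilde g$ on the subspace $H := \{\, y \in \R^n : \mathbbm{1}_n^T y = 0\,\}$.

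For item~1 the key is to show that $\tilde g$ is coercive on $H$; I expect this to be the main obstacle, and it is exactly where irreducibility enters. Given $y \in H\setminus\{0\}$, pick $p \in \argmin_i y_i$ and $q \in \argmax_i y_i$; then $y_q - y_p > 0$, and on $H$ the quantity $y_q - y_p$ tends to $+\infty$ as $\|y\|\to\infty$. Since $A$ is irreducible, the directed graph with an arc $i\to j$ whenever $a_{ij} > 0$ is strongly connected, so it contains a directed path from $p$ to $q$ with at most $n-1$ arcs; along this path the values $y_i$ increase by $y_q - y_p$ in total, so some arc $(i,j)$ on it satisfies $y_j - y_i \ge (y_q - y_p)/(n-1)$ together with $a_{ij} > 0$. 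With $\varepsilon := \min\{\, a_{ij} : a_{ij} > 0\,\} > 0$ this yields $\tilde g(y) \ge \varepsilon\, e^{(y_q - y_p)/(n-1)}$, which is the desired coercivity. A continuous coercive function on $H$ attains its minimum at some $y^* \in H$, and $d^* := (e^{y^*_1},\dots,e^{y^*_n}) \in \R_{>0}^n$ then solves \eqref{eq:minfd}.

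For item~2 I would use the first-order optimality condition. A direct differentiation gives $\nabla\tilde g(y) = B^T\mathbbm{1}_n - B\,\mathbbm{1}_n$ with $B = D^{-1}AD$; the entries of this gradient sum to zero, so $\nabla\tilde g(y)\in H$ for every $y$, consistently with the scaling invariance. Since $y^*$ minimizes $\tilde g$ over the subspace $H$ and the gradient already lies in $H$, it must vanish: $\nabla\tilde g(y^*) = 0$, i.e. $B^T\mathbbm{1}_n = B\,\mathbbm{1}_n$ with $B = (\diag(d^*))^{-1} A\, \diag(d^*)$. This is precisely the statement that $(\diag(d^*))^{-1} A\, \diag(d^*)$ is balanced.

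For item~3, suppose $\bar d \in \R_{>0}^n$ is another solution and set $\bar y := \log\bar d - \tfrac{1}{n}(\mathbbm{1}_n^T\log\bar d)\,\mathbbm{1}_n \in H$, so both $y^*$ and $\bar y$ minimize $\tilde g$ on $H$. By convexity $\tilde g$ equals its minimum along the whole segment $[y^*,\bar y]\subseteq H$, hence is affine there; and since $\tilde g$ is a sum of the convex functions $a_{ij} e^{y_j - y_i}$, each such term must itself be affine along this segment. For an arc with $a_{ij} > 0$, the map $t \mapsto a_{ij}\exp\big((y^*_j - y^*_i) + t\,(v_j - v_i)\big)$, where $v := \bar y - y^*$, is affine only if $v_j = v_i$. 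Hence $v_i = v_j$ across every arc of the strongly connected (in particular connected) graph, so $v$ is constant on $\{1,\dots,n\}$; being in $H$ this forces $v = 0$, i.e. $\bar y = y^*$ and therefore $\bar d = c\, d^*$ with $c := \exp\big(\tfrac{1}{n}\mathbbm{1}_n^T\log\bar d\big) > 0$.
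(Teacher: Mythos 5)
Your proof is correct and follows essentially the same route as the paper's: the logarithmic change of variables to obtain a convex function, irreducibility (via strong connectivity of the positive-entry graph) to force blow-up at infinity for existence and to propagate $r_i=r_j$ for uniqueness, and the vanishing-gradient condition to identify minimizers with balancings. The only differences are cosmetic — you work on the zero-sum hyperplane with an explicit coercivity bound instead of the simplex with a boundary blow-up argument, and you argue via affineness of each summand rather than the second derivative along the segment.
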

\begin{Remark}
%%%%%%%%%%%%%%%%%%%%%%%%%%%%%%%%%%%%%%%%
 A matrix is called  \emph{completely reducible} if it 
 is permutation-similar to a block-diagonal matrix, where each block is \emph{irreducible}. Equivalently, the graph corresponding to a completely reducible matrix is a union of strongly connected graphs.
    Several  recent papers  state that  irreducibility is a necessary and sufficient condition for~BDSS. This is wrong. For example,   the identity matrix   is~BDSS, but   not irreducible.  The correct statement is:
    a non-negative matrix is~BDSS if and only if it is  completely reducible. 
    Many of the results in this note which assume  irreducibilty (including Prop.~\ref{prop:strom_contraction}, Thm.~\ref{thm:diag_balance} and Lemma~\ref{lem:olshevsky_margin_stab} and Thm.~\ref{thm:olshevsky_stab_bal} below)
    are easily extended to the more general case of complete reducibility.
\end{Remark}
%%%%%%%%%%%%%%%
%%%%%%%%%%%%%%%
\begin{Remark} \label{rem:diagonal-invariance}
    The diagonal entries of~$A$
    do not affect the balancing:   if $D^{-1}AD$ is balanced,  with~$D$   a positive diagonal matrix, then for any diagonal matrix~$P$, $D^{-1}(A+P)D$ is also balanced.
\end{Remark}
%%%%%%%%%%%%
There exist efficient  numerical algorithms for matrix balancing that, under certain conditions, run in nearly linear time in the number of non-zero entries of the matrix, see~\cite{cohen2017matrix}. 
Matrix balancing   is a  useful   preconditioning step in many  matrix algorithms, and
procedures for  matrix balancing are often included in numeric computing software (e.g.,  the procedure  \texttt{balance} in MATLAB).

In some cases, there are closed-form expressions for the positive diagonal matrix~$D$ which balances~$A$. The following well-known result (see, e.g.~\cite[Ch.~0]{total_book}) gives such an expression for tridiagonal matrices. Note that a tridiagonal matrix is irreducible if and only if all entries on the super- and sub-diagonal are non-zero.
%%%%%%
\begin{Proposition}\label{prop:tridiag_balance}
    Let~$A \in \R^{n \times n}$ be Metzler, irreducible and tridiagonal. Define the positive diagonal matrix~$D \in \R^{n \times n}$ by~$d_{11} := 1$, and~$d_{ii} := \sqrt{\prod_{j=1}^{i-1}\frac{a_{j+1,j}}{a_{j,j+1}}}$ for~$i \geq 2$. Then~$D^{-1}AD$ is symmetric and thus  balanced.
%%%%%%%%%%
\end{Proposition}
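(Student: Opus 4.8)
The plan is a direct verification: show that $B := D^{-1} A D$ is symmetric, and then invoke the elementary fact (already noted in the text) that every symmetric matrix is balanced, which immediately gives that $A$ is BDSS with the stated $D$.

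First I would check that all the quantities defining $D$ are well posed and positive. Since $A$ is Metzler, its off-diagonal entries are non-negative; since $A$ is tridiagonal and irreducible, every super-diagonal entry $a_{j,j+1}$ and every sub-diagonal entry $a_{j+1,j}$ is nonzero, hence strictly positive. Therefore each ratio $a_{j+1,j}/a_{j,j+1}$ is positive, the product under the square root is positive, and $d_{ii} > 0$ for all $i$, so $D$ is a legitimate positive diagonal matrix. Conjugation by a diagonal matrix preserves the zero pattern, so $B = D^{-1} A D$ is again tridiagonal.

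Next I would compare the two off-diagonal bands of $B$. The diagonal entries satisfy $b_{ii} = a_{ii}$ (cf.\ Remark~\ref{rem:diagonal-invariance}), so they play no role. For $i = 1, \dots, n-1$ we have $b_{i,i+1} = (d_{ii})^{-1} a_{i,i+1} d_{i+1,i+1}$ and $b_{i+1,i} = (d_{i+1,i+1})^{-1} a_{i+1,i} d_{ii}$, hence
\[
\frac{b_{i,i+1}}{b_{i+1,i}} = \frac{a_{i,i+1}}{a_{i+1,i}} \cdot \frac{(d_{i+1,i+1})^2}{(d_{ii})^2}.
\]
By definition $(d_{i+1,i+1})^2 = \prod_{j=1}^{i} \frac{a_{j+1,j}}{a_{j,j+1}}$ and $(d_{ii})^2 = \prod_{j=1}^{i-1} \frac{a_{j+1,j}}{a_{j,j+1}}$, with the empty product equal to $1$ (matching $d_{11} = 1$); the telescoping quotient therefore equals $a_{i+1,i}/a_{i,i+1}$. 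Substituting gives $b_{i,i+1} = b_{i+1,i}$, so $B$ is symmetric. A symmetric matrix trivially satisfies $B \mathbbm{1}_n = B^T \mathbbm{1}_n$, i.e.\ it is balanced, which is the claim.

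There is essentially no hard step here; the only points requiring care are the index bookkeeping in the telescoping product and the justification that all sub- and super-diagonal entries of $A$ are strictly positive so that the square roots and reciprocals are meaningful — which is precisely where the Metzler, irreducible, and tridiagonal hypotheses enter.
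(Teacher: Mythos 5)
Your proof is correct and complete; the paper itself omits the proof of this proposition (citing a standard reference), and your direct verification --- positivity of the sub/super-diagonal entries from Metzler + irreducible + tridiagonal, followed by the telescoping computation showing $b_{i,i+1}=b_{i+1,i}$ --- is exactly the standard argument one would expect there. No gaps.
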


%%%%%%%%%%%%%%%%%%%%%%%%%%%%%%%%%%
\subsection{Marginal stability of a Metlzer matrix}
%%%%%%%%%%%%%%%%%%%%%%%%%%%%%%%%%%%%%%%%%%%%%%%%%%%
There are several well-known characterizations  of when a Metlzer matrix is Hurwitz~\cite{berman87,bullo_graph_metzler}. For our  purposes, we need    the following condition for marginal stability of   a Metzler matrix. For the sake of completeness, we include the proof. 
%%%%%%%%%%%%%%%%%%%%
\begin{Lemma}\label{lem:olshevsky_margin_stab}
    Let $A \in \R^{n \times n}$ be Metzler and irreducible. Then~$\alpha(A) \le 0$ iff there exists~$d \in \R_{>0}^n$ such that~$Ad \le 0$.
\end{Lemma}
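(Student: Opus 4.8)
The plan is to prove both implications using the Perron--Frobenius theory for Metzler matrices. Recall that since $A$ is Metzler, for $c>0$ large enough $A+cI$ is a non-negative matrix, and irreducibility of $A$ is inherited by $A+cI$; moreover $\alpha(A)\le 0$ iff the Perron--Frobenius eigenvalue (spectral radius) of $A+cI$ is at most $c$, and the shift does not change the eigenvectors. So throughout I would work with the dominant eigenvalue $\lambda := \alpha(A)$, which by Perron--Frobenius for irreducible Metzler matrices is a real eigenvalue of $A$ admitting an eigenvector $v\in\R_{>0}^n$ (strictly positive, again by irreducibility), and likewise a strictly positive left eigenvector $w^T$ with $w^T A = \lambda w^T$.

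For the forward direction ($\alpha(A)\le 0 \Rightarrow \exists\, d\in\R_{>0}^n$ with $Ad\le 0$): I would simply take $d := v$, the Perron eigenvector. Then $Ad = \lambda v \le 0$ because $\lambda=\alpha(A)\le 0$ and $v>0$ entrywise. This direction is essentially immediate once the Perron--Frobenius vector is in hand.

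For the reverse direction ($\exists\, d\in\R_{>0}^n$ with $Ad\le 0 \Rightarrow \alpha(A)\le 0$): I would use the strictly positive left Perron eigenvector $w^T$ with $w^TA=\alpha(A)w^T$. Left-multiplying the hypothesis $Ad\le 0$ by $w^T>0$ gives $0 \ge w^T A d = \alpha(A)\, w^T d$. Since $w>0$ and $d>0$ we have $w^T d>0$, hence $\alpha(A)\le 0$. (Alternatively, one can invoke the standard fact that for a Metzler matrix the existence of $d>0$ with $Ad\le 0$ is exactly the characterization of $-A$ being an M-matrix in the weak/singular sense, i.e. $\alpha(A)\le 0$; but the left-eigenvector argument is self-contained and short.)

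The only genuine content — and the step I would flag as the one needing care — is justifying the Perron--Frobenius facts in the \emph{Metzler} (not merely non-negative) setting and, in particular, that irreducibility guarantees the dominant eigenvector $v$ and the dominant left eigenvector $w$ are \emph{strictly} positive, which is what makes $w^Td>0$ and hence the reverse implication work. This is handled cleanly by the shift $A\mapsto A+cI$ reducing everything to the classical irreducible non-negative case, so in the write-up I would state this reduction explicitly and then cite Perron--Frobenius (e.g.~\cite{berman87}).
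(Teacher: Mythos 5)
Your proposal is correct, and one of the two implications coincides with the paper's argument: for $\alpha(A)\le 0\Rightarrow\exists d$, both you and the paper shift to $S=A+rI\ge 0$, invoke Perron--Frobenius for the irreducible non-negative matrix, and take $d$ to be the (strictly positive) Perron eigenvector, so that $Ad=\alpha(A)d\le 0$. For the converse implication you diverge: you left-multiply $Ad\le 0$ by the positive left Perron eigenvector $w^T$ to get $\alpha(A)\,w^Td\le 0$ and hence $\alpha(A)\le 0$, whereas the paper forms $B=(\diag(d))^{-1}A\diag(d)$, notes that $Ad\le 0$ makes every row sum of the Metzler matrix $B$ non-positive, and concludes via Gershgorin's theorem. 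Both are valid; your left-eigenvector argument is a clean duality-style one-liner (and in fact only needs $w\ge 0$, $w\neq 0$, since $d>0$ already forces $w^Td>0$), while the paper's Gershgorin route has the mild advantage of not invoking Perron--Frobenius at all for that direction and of working verbatim for reducible Metzler matrices. Your flagged point of care --- that the shift $A\mapsto A+cI$ reduces the Metzler case to the classical irreducible non-negative case and yields strictly positive right and left eigenvectors --- is exactly the right thing to make explicit, and is the same reduction the paper performs.
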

\begin{proof}
    Suppose that there exists $d \in \R_{>0}^n$ such that $Ad \le 0$. Then,
    $
        A \diag(d) \mathbbm{1} = Ad \le 0,
   $
      so the sum of every row  of the matrix~$B:=(\diag(d))^{-1}A\diag(d)$ is non-positive. Since $A$ is Metzler, so is~$B$ ,and thus 
$        b_{ii} + \sum_{\substack{j=1\\j\neq i}}^n |b_{ij}| = \sum_{j=1}^n b_{ij} \le 0,\quad i=1,\dots, n. 
   $
    By Gershgorin's Theorem~\cite[Thm.~6.1.1]{Horn2013matanalysis}, all eigenvalues of~$B$ lie in the closed left half plane. This implies that the same holds for the eigenvalues of~$A$. 
    
    To prove the converse implication, assume that~$\alpha(A) \le 0$. Since $A$ is Metzler and irreducible, there exists $r \ge 0$ such that~$
        S := A + r I 
 $
    is   irreducible and non-negative.
    By the Perron-Frobenius Theorem~\cite[Thm.~8.4.4]{Horn2013matanalysis}, $S$ has a real eigenvalue $\lambda >0$ and corresponding  eigenvector~$d\in \R_{>0}^n$. By the assumption,~$\lambda \le r$. Thus,~$        S d = \lambda d \le r d.
$    This gives~$A d \le 0$, and this completes the proof.
\end{proof}

\subsection{Minimal effort diagonal  stabilization of Metzler matrices}
%%%%%%%%%%%%%%%%%%%%%%%%%%%%%%%%%%%%%%%%%%%%%%%%%%%%%%%%%%%%%%%%%%%%%%%
We now review  a minimal effort  controller design for an irreducible positive LTI system based on matrix balancing. To the best of our knowledge, this idea first appeared in~\cite{Ma2022optimal} in the context of optimal lockdown design for pandemic control. With respect to~\cite[Theorem~4.6 and its proof]{Ma2022optimal}, the following theorem statement is more general (e.g., it allows for general Metzler matrices, arbitrary target spectral abscissa, 
and for  the diagonal perturbation to take negative values), more explicit (e.g., an explicit formula for the diagonal perturbation is given as a function of the balancing diagonal matrix), and establishes additional properties of the transcription (e.g., the Perron eigenvector of the closed-loop system); additionally, the proof is more concise.
  
\begin{Theorem}\label{thm:olshevsky_stab_bal}
    Let $A \in \R^{n \times n}$ be Metzler and irreducible.
    Fix weights~$w \in \R_{>0}^n$ and target spectral abscissa~$\eta \in \R$.
    Let~$d^* \in \R_{>0}^n$ be such that the matrix 
    \[
    (\diag(d^*))^{-1} \diag(w) A \diag(d^*)
    \]
    is balanced and define 
    %%%%%%%%%
    \begin{equation}\label{eq:opt_cont}
        \ell^* := (\diag(d^*))^{-1} A \diag(d^*) \mathbbm{1}_n - \eta \mathbbm{1}_n.
    \end{equation}
    Then
    \begin{enumerate}
        \item the Metzler matrix $A - \diag(\ell^*)$ has spectral abscissa~$\eta$ and Perron eigenvector~$d^*$.
        \item $\ell^*$ is the unique solution of the optimization problem
        \begin{equation}\label{eq:optim_stab}
        \begin{aligned}
            \min_{\ell \in \R^n} \quad & w^T \ell, \\
            \mathrm{s.t.} \quad & \alpha(A - \diag(\ell)) \le \eta.
        \end{aligned}
        \end{equation}
        \item If~$A - \eta I \ge 0$ then $\ell^* \in \R_{>0}^n$.
    \end{enumerate}
\end{Theorem}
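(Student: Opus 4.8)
The plan is to handle the three claims separately: claims (1) and (3) reduce to short computations with the diagonal matrix $D^* := \diag(d^*)$, whereas claim (2) is the substantive part and will combine Lemma~\ref{lem:olshevsky_margin_stab} with a second application of the Balancing Theorem~\ref{thm:diag_balance}. For claim (1), the definition~\eqref{eq:opt_cont} of $\ell^*$ gives $\diag(\ell^*) d^* = D^* \ell^* = A D^* \mathbbm{1}_n - \eta D^* \mathbbm{1}_n = A d^* - \eta d^*$, so $(A - \diag(\ell^*)) d^* = \eta d^*$. Hence $d^* \in \R_{>0}^n$ is an eigenvector of the irreducible Metzler matrix $A - \diag(\ell^*)$ for the eigenvalue $\eta$; since an irreducible Metzler matrix admits a positive eigenvector only for its spectral abscissa (apply Perron--Frobenius to a non-negative shift), this yields $\alpha(A - \diag(\ell^*)) = \eta$ with Perron eigenvector $d^*$. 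For claim (3), the same identity rewrites $\ell^* = (D^*)^{-1}(A - \eta I) d^*$; if $A - \eta I \ge 0$ then (for $n \ge 2$) it is non-negative and irreducible, hence has no zero row, so $(A - \eta I) d^* > 0$ componentwise because $d^* > 0$, and left-multiplication by the positive diagonal $(D^*)^{-1}$ keeps every entry positive.

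For claim (2), feasibility of $\ell^*$ is immediate from claim (1). For optimality I would first rewrite the constraint: for $\ell \in \R^n$, $\alpha(A - \diag(\ell)) \le \eta$ holds iff the irreducible Metzler matrix $A - \diag(\ell) - \eta I$ has non-positive spectral abscissa, which by Lemma~\ref{lem:olshevsky_margin_stab} holds iff there exists $d \in \R_{>0}^n$ with $(A - \diag(\ell) - \eta I) d \le 0$; using $\diag(\ell) d = \diag(d)\ell$ and dividing componentwise by the positive entries of $d$, this becomes
\begin{equation}\label{eq:reform-constraint}
\ell \ge (\diag(d))^{-1} A \diag(d) \mathbbm{1}_n - \eta \mathbbm{1}_n .
\end{equation}
Set $g(d) := w^T (\diag(d))^{-1} A \diag(d) \mathbbm{1}_n$; the key observation is that, since diagonal matrices commute,
\[
g(d) = \mathbbm{1}_n^T (\diag(d))^{-1} \diag(w) A \diag(d) \mathbbm{1}_n ,
\]
which is precisely the balancing objective of Theorem~\ref{thm:diag_balance} for the matrix $\diag(w) A$ (Metzler and irreducible, since $w>0$ preserves the off-diagonal zero pattern). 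Since $(\diag(d^*))^{-1} \diag(w) A \diag(d^*)$ is balanced, $d^*$ is a global minimizer of $g$: the balancers of an irreducible Metzler matrix are exactly its minimizers (they are the critical points of $g$ in the logarithmic coordinates $z_i = \log d_i$, in which $g$ is convex) and by Theorem~\ref{thm:diag_balance} they form a single positive scaling ray. Taking the $w$-weighted sum of~\eqref{eq:reform-constraint} then shows that every feasible $\ell$ satisfies $w^T \ell \ge g(d) - \eta\, w^T \mathbbm{1}_n \ge g(d^*) - \eta\, w^T \mathbbm{1}_n = w^T \ell^*$, so $\ell^*$ is optimal. For uniqueness, equality throughout forces $g(d) = g(d^*)$, hence $d = c d^*$ by Theorem~\ref{thm:diag_balance}(3); scaling cancels in $(\diag(d))^{-1} A \diag(d)$, so~\eqref{eq:reform-constraint} at this $d$ reads $\ell \ge \ell^*$, and since $w > 0$, equality of the $w$-weighted sums forces $\ell = \ell^*$.

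I expect the main obstacle to be the chain of equivalences in claim (2): passing cleanly from the spectral constraint to~\eqref{eq:reform-constraint} through Lemma~\ref{lem:olshevsky_margin_stab}, and — the real crux — recognizing that the weighted row-sum functional $g(d) = w^T (\diag(d))^{-1} A \diag(d) \mathbbm{1}_n$ is literally the balancing objective of $\diag(w) A$, which is what allows Theorem~\ref{thm:diag_balance} to be applied on both sides so that optimality and uniqueness close simultaneously. The one ingredient not stated verbatim in the excerpt that I would use is that the diagonal balancer of an irreducible Metzler matrix coincides with the global minimizer of $g$ (equivalently, that every critical point of $g$ is a minimizer), which follows from the convexity of $g$ in the logarithmic coordinates $z_i = \log d_i$; I would either cite this alongside Theorem~\ref{thm:diag_balance} or add the one-line convexity remark.
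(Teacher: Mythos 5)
Your proposal is correct and follows essentially the same route as the paper's proof: Lemma~\ref{lem:olshevsky_margin_stab} converts the spectral constraint into the existence of a positive vector $d$ with $(A-\diag(\ell))d\le\eta d$, the identity $w^T(\diag(d))^{-1}A\diag(d)\mathbbm{1}_n=\mathbbm{1}_n^T(\diag(d))^{-1}\diag(w)A\diag(d)\mathbbm{1}_n$ reduces optimality to the balancing problem of Theorem~\ref{thm:diag_balance}, and claims (1) and (3) follow from the same eigenvector computation and non-negativity argument. The only (welcome) differences are presentational: you establish claim (1) by direct computation up front rather than as a byproduct of the optimization reformulation, and you spell out the uniqueness of $\ell^*$ (via $d=cd^*$ forcing $\ell\ge\ell^*$ componentwise) more explicitly than the paper does.
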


For~$\eta\leq 0$ the goal of problem~\eqref{eq:optim_stab} is to guarantee that
the spectral abscissa of~$B:=A-\diag(\ell)$  is smaller or equal to~$\eta $, so in particular the irreducible matrix~$B$ is (marginally)  Hurwitz. This should be done with the ``smallest possible'' diagonal perturbation~$\ell$ in the sense that~$w^T\ell$ is minimized. 
There is considerable literature 
on finding the closest
Metzler and Hurwitz matrix to a given matrix (see~\cite{closest_metlzer} and the references therein), but the advantages of the formulation in~\eqref{eq:optim_stab}
are:
(1)~$\alpha(A-\diag(\ell)) $ is convex in~$\ell$~\cite{cohen1981convexity};
and
(2)~as we will see below, it can be naturally interpreted as finding ``minimal effort'' local controllers that render a network contractive.

Note that~$w$ does not appear explicitly in the formula for~$\ell^*$, but the vector~$d^*$ there does depend on~$w$. 

%%%%%%%%%%%%%%%%%%%%
\begin{proof}
%%%%%%%%%%%%%%%%%%%
	Since $A$ is Metzler and irreducible and $w\in\R^n_{>0}$, $\diag(w)A$ is also Metzler and irreducible, and by  Thm.~\ref{thm:diag_balance} it is~BDSS.
	
    We now show that $\ell^*$ in~\eqref{eq:opt_cont}
    is the optimal solution to~\eqref{eq:optim_stab}.
    %%%%%%
    By Thm.~\ref{thm:diag_balance} and Remark~\ref{rem:diagonal-invariance},~$d^*$ in the theorem statement is a minimizer of
    \begin{equation}\label{eq:optim_stab_ww}
        \min_{d \in \R_{>0}^n} f(d),
    \end{equation}
    with
    \begin{align*}\label{eq:optim_stab_ww}
        f(d) &:=
        \mathbbm{1}_n^T  (\diag(d))^{-1}\diag(w) (A - \eta I) \diag(d) \mathbbm{1}_n \nonumber \\
        &= w^T  (\diag(d))^{-1} (A - \eta I) \diag(d) \mathbbm{1}_n.
    \end{align*}
    Since~$w \in \R_{>0}^n$,~$f(d) \le w^T \ell$
    for any~$\ell \in \R^n$ such that~$\ell \ge (\diag(d))^{-1} (A - \eta I) \diag(d)\mathbbm{1}_n$. Furthermore, as~$d \in \R_{>0}^n$,
    \begin{align*}
        & (\diag(d))^{-1} (A - \eta I) \diag(d) \mathbbm{1}_n \le \ell \\
        & \iff (A - \eta I)  d  \le \diag(d)\ell = \diag(\ell) d \\
        & \iff (A - \diag(\ell))d \le \eta d.
    \end{align*}
    We conclude that~\eqref{eq:optim_stab_ww} can be rewritten as 
    \begin{equation}\label{eq:optim_stab_aux2}
    \begin{aligned}
        \min_{\substack{\ell \in \R^n\\d \in \R_{>0}^n} } \quad & w^T \ell, \\
        \mathrm{s.t.} \quad & (A - \diag(\ell))d \le \eta d,
    \end{aligned}
    \end{equation}
    and optimal solutions to~\eqref{eq:optim_stab_aux2} must satisfy the equality~$(A - \diag(\ell))d = \eta d$. Then, by Thm.~\ref{thm:diag_balance},~$(\ell^*,d^*)$ is an optimal solution to~\eqref{eq:optim_stab_aux2}. Since~$A - \diag(\ell^*)$ is Metzler and irreducible,~$(A - \diag(\ell^*))d^* = \eta d^*$ implies that~$\eta$ is the spectral abscissa of~$A - \diag(\ell^*)$ and~$d^*$ is a Perron eigenvector. This proves statement~1).
    
    By Lemma~\ref{lem:olshevsky_margin_stab},~\eqref{eq:optim_stab_aux2} is equivalent to~\eqref{eq:optim_stab}. Thus,~$\ell^*$ is an optimal solution to~\eqref{eq:optim_stab}, and it is unique by the third statement in Thm.~\ref{thm:diag_balance}. This proves statement~2).
     
    Finally, statement~3) follows from the definition of~$\ell^*$ and the fact that $A - \eta I$ is non-negative and irreducible.
\end{proof}

\begin{Example}
Consider the controlled two-dimensional flow system
$
\dot x=A x -\diag(\ell_1,\ell_2) x,
$
where~$A:=\begin{bmatrix}
-1&1\\1&-1
\end{bmatrix} f$.
Here~$f>0$ models  the flow rate between two nodes.
Since $A-\eta I_2\geq0$ holds for any~$\eta\leq -f$,  we set~$\eta:=-(f+\varepsilon)$, with~$\varepsilon\geq 0$.  Consider the optimization problem~\eqref{eq:optim_stab}  with~$w:=\begin{bmatrix}
1&w_2
\end{bmatrix}^T$, where~$w_2>0$. Then~$\diag(w)A=\begin{bmatrix}
-1&1\\w_2&-w_2
\end{bmatrix}f$, and
$
(\diag(d))^{-1} \diag(w)A \diag(d)
$
is balanced for~$d=\begin{bmatrix}1&\sqrt{w_2}
\end{bmatrix}^T$, so~\eqref{eq:opt_cont} gives
\be\label{eq:lopt}
\ell^*  =  \begin{bmatrix}
 \sqrt{w_2} f+\varepsilon &  \frac{1}{\sqrt{w_2}} f+\varepsilon 
\end{bmatrix} ^T.
\ee
The closed-loop system is then
$\dot x=A_c x$, with
\begin{align*}
 A_c&:=A-\diag(\ell^*)\\
    &=\begin{bmatrix}
                          -\varepsilon-(1+\sqrt{w_2} ) f & f\\
                          f& -\varepsilon-( 1+\frac{1}{\sqrt{w_2}}  )f
%%%%
    \end{bmatrix}.
\end{align*}
The eigenvalues of~$A_c$ are~$-(f+\varepsilon)$ and~$-(f+\varepsilon) -(\sqrt{w_2}+\frac{1}{\sqrt{w_2}})f$, so~$\alpha(A_c)=\eta$.
Note that~\eqref{eq:lopt} implies the following. \begin{enumerate}
    \item 
 If~$w_2\ll 1$
(i.e., the cost function is~$w^T \ell \approx \ell_1$) 
then~$\ell^*\approx 
\begin{bmatrix}
\varepsilon  & \frac{1}{\sqrt{w_2}} f
\end{bmatrix}^T$.
\item If~$w_2=1$ (i.e., the cost function is~$w^T \ell = \ell_1+\ell_2$) 
then~$\ell^*= 
\begin{bmatrix}
  f  +\varepsilon& f+\varepsilon
\end{bmatrix}^T.$
\item
If~$w_2\gg 1 $
(i.e., the cost function is~$w^T \ell \approx w_2 \ell_2$) 
then~$\ell^*\approx 
\begin{bmatrix}
\sqrt{w_2} f  & \varepsilon
\end{bmatrix}^T.$
%%%%%%%%%%%%%%%%%%%
 \end{enumerate}

%%%%%%%%%%%
\end{Example}

%%%%%%%%%%%%%%%%%%%%%%%%%%%

\section{Main results}\label{sec:main}
%%%%%%%%%%
We now combine the ideas above to provide a novel,  simple and efficient algorithm for 
finding local  controllers  guaranteeing  contraction in a networked system. We begin with several auxiliary results. 

Ref.~\cite{Russo2013hier_contraction} used Thm.~\ref{thm:strom} to derive a hierarchical approach to contraction. This approach requires finding a monotonic norm under which a certain nonlinear system is contracting. This is hard to do in general, as the monotonic norm has to induce a matrix measure that is negative at every point in the state space. This approach was further simplified in~\cite{Davydov2021noneuclidean}, which derived a stronger sufficient condition (i.e., one that is  applicable for a smaller family of systems) which instead involves checking whether a certain \emph{constant}
  Metzler matrix is Hurwitz. In~\cite{Davydov2021noneuclidean} this result was stated in terms of one-sided Lipschitz constants. Here we state and prove this result using matrix measures instead. The first step is to remove the dependency of~$B$ on~$t$ and~$x$ and replace it with a constant matrix. To do so, we will make use of the fact that~$B$ is Metzler by construction, and that $|\cdot|_0$ is a monotonic norm. 
%%%%%%%%%%%%%%%%%%
\begin{Proposition}\label{prop:measure_monotone}
%%%%%
Let~$|\cdot|_0:\R^n\to \R_+$ be a monotonic vector norm, and let~$||\cdot||_0:\R^{n\times n }\to \R_+$ and~$\mu_0:\R^{n\times n}\to \R$ denote the induced matrix norm and matrix measure. 
If~$A,B \in \R^{n \times n}$ are Metzler  
   and~$A \le B$  then 
$
        \mu_0(A) \le \mu_0(B)
$.
\end{Proposition}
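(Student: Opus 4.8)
The plan is to use the standard formula for the matrix measure induced by a norm, namely
\[
\mu_0(M) = \lim_{h \to 0^+} \frac{\|I + hM\|_0 - 1}{h},
\]
and to exploit the fact that for small enough $h > 0$ the matrices $I + hA$ and $I + hB$ are both entrywise non-negative, since $A$ and $B$ are Metzler (off-diagonal entries already non-negative, and the diagonal entries of $I + hM$ become positive once $h < 1/\max_i |m_{ii}|$). Once both matrices are non-negative, the hypothesis $A \le B$ gives $0 \le I + hA \le I + hB$ entrywise.

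The key step is then to invoke monotonicity of $|\cdot|_0$ to pass from the entrywise inequality on the matrices to an inequality on the induced operator norms. Concretely, for the induced norm one has $\|M\|_0 = \sup_{|z|_0 = 1} |Mz|_0$; restricting to the non-negative orthant, for any $z \ge 0$ we have $0 \le (I+hA)z \le (I+hB)z$ entrywise, so by monotonicity of $|\cdot|_0$, $|(I+hA)z|_0 \le |(I+hB)z|_0$. To extend this to all $z$ (not just $z \ge 0$), I would use that a monotonic norm is \emph{absolute}, i.e. $|z|_0$ depends only on the entrywise absolute values $|z_i|$ (this equivalence is classical, see the Bauer reference already cited in the paper for monotonic norms); hence for arbitrary $z$, letting $|z|$ denote the entrywise-absolute-value vector, $|(I+hA)z|_0 \le |\,(I+hA)|z|\,|_0 \le |\,(I+hB)|z|\,|_0 \le \|I+hB\|_0$ where the first inequality holds because $|(I+hA)z| \le (I+hA)|z|$ entrywise (non-negativity of $I+hA$) combined with absoluteness/monotonicity. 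Taking the supremum over $|z|_0 = 1$ yields $\|I+hA\|_0 \le \|I+hB\|_0$.

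Finally, subtracting $1$, dividing by $h > 0$, and taking the limit $h \to 0^+$ preserves the inequality, giving $\mu_0(A) \le \mu_0(B)$.

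The main obstacle I anticipate is the bookkeeping needed to go from the entrywise matrix inequality to the operator-norm inequality cleanly: the induced norm is a supremum over \emph{all} unit vectors, not just non-negative ones, so one must carefully use the characterization of monotonic norms as absolute norms (the Bauer–Stoer–Witzgall equivalence) to reduce to the non-negative case. The limit and non-negativity-for-small-$h$ arguments are routine; the one genuine point of care is ensuring the monotonicity hypothesis is applied to vectors, as stated, rather than being silently assumed for matrices.
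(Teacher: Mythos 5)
Your proof is correct and follows essentially the same route as the paper: both pass to $I+hA \le I+hB$, note these are non-negative for small $h>0$ since $A,B$ are Metzler, deduce $\|I+hA\|_0 \le \|I+hB\|_0$ from monotonicity of the norm, and conclude via the limit definition of the matrix measure. The only difference is that the paper simply cites Bauer--Stoer--Witzgall (Thm.~4 of the reference) for the step ``$0 \le M \le N$ entrywise implies $\|M\|_0 \le \|N\|_0$ for a monotonic norm,'' which you instead prove directly via the absolute-norm characterization; your expansion of that step is accurate.
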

%%%%%%%%%%%%%%%%%%%%%%%%%%
\begin{proof}
    Since $A \le B$,  we have
$
        I + hA \le I + hB  
$ for any~$h \geq 0$.
    Furthermore, since~$A$ and~$B$ are Metzler,   we have that~$I + hA$ and~$I + hB$ are non-negative matrices
    for any~$h>0$ sufficiently 
    small. 
    By~\cite[Thm.~4]{Bauer1961_mono_norms},
    $|| I + h A ||_0\leq || I + hB ||_0$, and using the definition of the matrix measure completes the proof. 
\end{proof}

Consider now the networked system~\eqref{eq:network}. Construct $B$ from the blocks $J^{ij}$ of its Jacobian. Define a \emph{constant} matrix~$\hat{J} \in \R^{m\times m}$  by
\begin{equation}\label{eq:J_sup}
    \hat{J}_{ij} := \begin{cases}
        \sup_{\substack{x \in \Omega\\t \ge 0}} \mu_i(J^{ii}(t,x)), & i=j, \\
        \sup_{\substack{x \in \Omega\\t \ge 0}} \|J^{ij}(t,x)\|_{ij}, & i \neq j.
    \end{cases}
\end{equation}
By construction, $\hat{J}$ is Metzler and $B(t,x) \le \hat{J}$   for all $t \ge 0$ and $x \in \Omega$. This leads to the following result.
\begin{Proposition}\label{prop:strom_contraction}
%%%%%%%%%%%%%%%%%%%%%%
Let $\mu$ denote  the matrix measure induced by the norm defined in~\eqref{eq:hier_norm}.
Fix~$\varepsilon>0$.  There exists a monotonic norm~$|\cdot|_0$ with induced matrix measure $\mu_0(\cdot)$ such that
    \begin{equation}\label{eq:mono_norm_abscissa}
      \mu(J(t,x)) \leq \mu_0(\hat{J}) \le \alpha(\hat{J}) + \varepsilon,  \text{ for all } t\geq0,x\in\Omega. 
    \end{equation}
 %%%%%
    In particular, if $\hat{J}$ is Hurwitz then the network~\eqref{eq:network} is contracting with rate~$\alpha(\hat{J}) + \varepsilon$.
    %%%%
    If in addition $\hat{J}$ is irreducible, then~\eqref{eq:mono_norm_abscissa} 
    holds with $\varepsilon=0$.
\end{Proposition}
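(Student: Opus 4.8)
The plan is to prove the two inequalities in~\eqref{eq:mono_norm_abscissa} separately, postponing the choice of the monotonic norm~$|\cdot|_0$ to the second one. The left inequality $\mu(J(t,x)) \le \mu_0(\hat J)$ holds for \emph{any} monotonic $|\cdot|_0$: by Ström's inequality (Theorem~\ref{thm:strom}), $\mu(J(t,x)) \le \mu_0(B(t,x))$, where $B(t,x)\in\R^{m\times m}$ is the Metzler matrix assembled from the blocks $J^{ij}(t,x)$ via the fixed component norms $|\cdot|_1,\dots,|\cdot|_m$; since $B(t,x)\le\hat J$ entrywise by the definition~\eqref{eq:J_sup} and both matrices are Metzler, Proposition~\ref{prop:measure_monotone} gives $\mu_0(B(t,x))\le\mu_0(\hat J)$. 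Chaining these bounds yields $\mu(J(t,x))\le\mu_0(\hat J)$ for all $t\ge0$, $x\in\Omega$. Crucially, $\hat J$ is independent of $|\cdot|_0$, so we are still free to pick $|\cdot|_0$ afterwards.

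To obtain the right inequality, first assume $\hat J$ is irreducible. Then $\hat J - \alpha(\hat J) I_m$ is Metzler, irreducible, and marginally Hurwitz, so Lemma~\ref{lem:olshevsky_margin_stab} provides $d\in\R_{>0}^m$ with $(\hat J - \alpha(\hat J)I_m)d\le0$, i.e.\ $\hat J d\le\alpha(\hat J)d$. Choose $|\cdot|_0$ to be the weighted sup-norm $|y|_0:=\max_i |y_i|/d_i$, which is monotonic; its induced matrix measure is $\mu_0(M)=\mu_\infty\big((\diag(d))^{-1}M\diag(d)\big)$, with $\mu_\infty$ the standard $\ell_\infty$ matrix measure. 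Since $(\diag(d))^{-1}\hat J\diag(d)$ is again Metzler, its $\ell_\infty$ measure equals its largest row sum, namely $\max_i(\hat J d)_i/d_i\le\alpha(\hat J)$. Hence $\mu_0(\hat J)\le\alpha(\hat J)$, which proves~\eqref{eq:mono_norm_abscissa} with $\varepsilon=0$ in the irreducible case.

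For a general Metzler $\hat J$, replace it by $\hat J_\delta:=\hat J+\delta(\mathbbm{1}_m\mathbbm{1}_m^T - I_m)$, which is Metzler and irreducible for every $\delta>0$ and satisfies $\hat J\le\hat J_\delta$. Because the eigenvalues depend continuously on the matrix entries, $\alpha(\hat J_\delta)\to\alpha(\hat J)$ as $\delta\to0^+$, so we may fix $\delta>0$ with $\alpha(\hat J_\delta)\le\alpha(\hat J)+\varepsilon$. Applying the irreducible case to $\hat J_\delta$ yields $d\in\R_{>0}^m$ with $\hat J_\delta d\le\alpha(\hat J_\delta)d$, hence $\hat J d\le\hat J_\delta d\le(\alpha(\hat J)+\varepsilon)d$; the weighted sup-norm with weights $d$ then gives $\mu_0(\hat J)\le\alpha(\hat J)+\varepsilon$. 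Combined with the left inequality, $\mu(J(t,x))\le\alpha(\hat J)+\varepsilon$ for all $t\ge0$, $x\in\Omega$, which is exactly the asserted contraction bound; when $\hat J$ is Hurwitz and $\varepsilon<|\alpha(\hat J)|$ this bound is negative, so~\eqref{eq:network} is contracting.

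The main obstacle is the reducible case: Lemma~\ref{lem:olshevsky_margin_stab} is stated only for irreducible matrices, so one must pass to an irreducible perturbation and absorb the change in spectral abscissa into the slack $\varepsilon$ — which is also why $\varepsilon=0$ cannot be expected unless $\hat J$ is irreducible. The remaining steps are routine and should be verified: weighted sup-norms are monotonic, diagonal similarity preserves the Metzler property, and the $\ell_\infty$ matrix measure of a Metzler matrix equals its largest row sum.
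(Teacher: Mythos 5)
Your proof is correct, but it takes a genuinely different route from the paper's. The paper disposes of the proposition in two lines by citing Str\"om's Theorem~2 (for any matrix and any $\varepsilon>0$ there is a norm whose induced measure is within $\varepsilon$ of the spectral abscissa) and Theorem~3 together with Perron--Frobenius (simplicity of $\alpha(\hat J)$ in the irreducible case gives $\varepsilon=0$). You instead construct the norm explicitly: Lemma~\ref{lem:olshevsky_margin_stab} applied to $\hat J-\alpha(\hat J)I_m$ yields $d\in\R_{>0}^m$ with $\hat Jd\le\alpha(\hat J)d$, and the weighted sup-norm $|y|_0=\max_i|y_i|/d_i$ then satisfies $\mu_0(\hat J)=\mu_\infty\bigl((\diag(d))^{-1}\hat J\diag(d)\bigr)=\max_i(\hat Jd)_i/d_i\le\alpha(\hat J)$, with the reducible case handled by the perturbation $\hat J+\delta(\mathbbm{1}_m\mathbbm{1}_m^T-I_m)$ and continuity of the spectrum. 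This buys two things the paper's citation-based proof leaves implicit: first, the norm you produce is \emph{manifestly} monotonic (a weighted $\ell_\infty$ norm), which is actually required for the left inequality $\mu(J)\le\mu_0(B)\le\mu_0(\hat J)$ via Theorem~\ref{thm:strom} and Proposition~\ref{prop:measure_monotone} --- Str\"om's general construction does not automatically deliver a monotonic norm, and the paper glosses over this point; second, your argument stays entirely within the Metzler/Perron framework already developed in the paper, so it is self-contained. The trade-off is that your $\varepsilon=0$ conclusion exploits irreducibility through the existence of a strictly positive (sub)eigenvector rather than through semisimplicity of $\alpha(\hat J)$, which is a slightly different (and here equally valid) mechanism. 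All the auxiliary facts you flag for verification --- monotonicity of weighted sup-norms, preservation of the Metzler property under positive diagonal similarity, and $\mu_\infty$ of a Metzler matrix being its largest row sum --- are indeed standard and correct.
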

%%%%%%%%%%%%%
\begin{proof}
    Eq.~\eqref{eq:mono_norm_abscissa} follows from~\cite[Thm.~2]{Strom1975}. The fact that the system is contracting if $\hat{J}$ is Hurwitz then follows by Thm.~\ref{thm:strom}.
    %%%%%%%%%%
  If~$\hat{J}$ is irreducible (and  Metzler)  the  Perron-Frobenius Theorem~\cite[Thm.~8.4.4]{Horn2013matanalysis} implies that~$\alpha(\hat{J})$ is a simple eigenvalue of $\hat{J}$. Now~\cite[Thm.~3]{Strom1975} implies that~\eqref{eq:mono_norm_abscissa} holds also for~$\varepsilon=0$.
  %%%%%%%%%%%%%%%%
\end{proof}

We now apply Prop.~\ref{prop:strom_contraction} to obtain a sufficient condition for contraction in the networked system~\eqref{eq:network}. Since~$\mu(A + \alpha I) = \mu(a) + \alpha$ for any matrix measure and any~$\alpha \in \R$, Eq.~\eqref{eq:jij} gives
%%%%%%%%%%%
$
    \mu(J^{ii}) = \mu(\frac{\partial f^i}{\partial x^i})  - u_i.
$
Therefore, a sufficient condition for~\eqref{eq:network} to be contracting is that the Metzler matrix $\hat{J}  -  \diag(u)$ is Hurwitz. Combining this with Thm.~\ref{thm:olshevsky_stab_bal} yields the following result for determining an upper bound on the effort required to guarantee that~\eqref{eq:network} is contracting.
%%%%%%%%%%%%%%%%%%%%%
\begin{Theorem}\label{thm:net_contract}
%%%%%%%%%%%%%%%%%
    Consider the networked system~\eqref{eq:network} and define~$\hat J\in\R^{m\times m}$ as in~\eqref{eq:J_sup}. Suppose that~$\hat J$ is irreducible. Fix~$w \in \R_{>0}^m$ and $\eta > 0$ such that $\hat J + \eta I_m \ge 0$.
    Then  there exists a~$d \in \R_{>0}^m$ such that $(\diag(d))^{-1} \diag(w) \hat{J} \diag(d)$ is balanced, and
    \begin{equation}\label{eq:opt_controller_contract}
         v  ^* := (\diag(d))^{-1} \hat{J} \diag(d) \mathbbm{1}_m + \eta \mathbbm{1}_m
    \end{equation}
    is the optimal solution to
    \begin{equation}\label{eq:suboptimal_contract}
    \begin{aligned}
        \min_{v \in \R_{>0}^m} \quad & w^T v, \\
        \mathrm{s.t.} \quad & \alpha(\hat{J} - \diag( v )) \le -\eta.
    \end{aligned}
    \end{equation}
    Furthermore, $v ^*$ is a feasible solution of~\eqref{eq:opt_contract}.
\end{Theorem}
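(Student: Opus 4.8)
The statement to prove is Theorem~\ref{thm:net_contract}, which asserts three things: (i) existence of a balancing diagonal~$d$; (ii) that~$v^*$ defined by~\eqref{eq:opt_controller_contract} solves the optimization problem~\eqref{eq:suboptimal_contract}; and (iii) that~$v^*$ is feasible for the original problem~\eqref{eq:opt_contract}. My plan is to reduce everything to already-established results, matching notation carefully. The main work is bookkeeping: recognizing that~\eqref{eq:suboptimal_contract} is exactly the instance of~\eqref{eq:optim_stab} from Theorem~\ref{thm:olshevsky_stab_bal} obtained by taking ``$A$'' to be~$\hat J$ and target spectral abscissa~``$\eta$'' to be~$-\eta$ (note the sign flip: here~$\eta>0$ is a contraction rate, so the target abscissa is~$-\eta<0$), and~``$\ell$'' to be~$v$.

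First I would invoke the hypotheses: $\hat J$ is Metzler by construction (as noted after~\eqref{eq:J_sup}) and irreducible by assumption, so Theorem~\ref{thm:diag_balance} applies to~$\diag(w)\hat J$ and yields the promised~$d\in\R_{>0}^m$ with~$(\diag(d))^{-1}\diag(w)\hat J\diag(d)$ balanced; this gives part~(i). Then I would apply Theorem~\ref{thm:olshevsky_stab_bal} with~$A\leftarrow\hat J$, weights~$w$, and target abscissa~$-\eta$. The formula~\eqref{eq:opt_cont} there reads~$\ell^*=(\diag(d))^{-1}\hat J\diag(d)\mathbbm{1}_m-(-\eta)\mathbbm{1}_m=(\diag(d))^{-1}\hat J\diag(d)\mathbbm{1}_m+\eta\mathbbm{1}_m$, which is precisely~$v^*$ in~\eqref{eq:opt_controller_contract}. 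By statement~2) of Theorem~\ref{thm:olshevsky_stab_bal}, $v^*$ is the unique solution of~$\min\{w^Tv : \alpha(\hat J-\diag(v))\le-\eta\}$, which is~\eqref{eq:suboptimal_contract} — except for one subtlety I must address: Theorem~\ref{thm:olshevsky_stab_bal} minimizes over~$\ell\in\R^m$ whereas~\eqref{eq:suboptimal_contract} constrains~$v\in\R_{>0}^m$. This is resolved by statement~3) of Theorem~\ref{thm:olshevsky_stab_bal}: the hypothesis~$\hat J+\eta I_m\ge0$ is exactly the hypothesis~``$A-\eta_{\text{target}}I\ge0$'' with~$\eta_{\text{target}}=-\eta$, hence~$\ell^*=v^*\in\R_{>0}^m$, so the unconstrained minimizer already lies in the positive orthant and therefore also solves the constrained problem~\eqref{eq:suboptimal_contract}. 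That settles part~(ii).

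For part~(iii), I need to show~$v^*$ is feasible for~\eqref{eq:opt_contract}, i.e., that there is a matrix measure~$\mu$ with~$\mu(J(t,x,v^*))\le-\eta$ for all~$t\ge0$, $x\in\Omega$. Here I would use the discussion preceding the theorem: by~\eqref{eq:jij} and~$\mu(\cdot+\alpha I)=\mu(\cdot)+\alpha$, the matrix~$\hat J$ built from the Jacobian blocks of the \emph{uncontrolled} system satisfies that the matrix~$\hat J-\diag(v^*)$ plays the role of the constant bound for the controlled Jacobian. Since~$\alpha(\hat J-\diag(v^*))=-\eta$ by statement~1) of Theorem~\ref{thm:olshevsky_stab_bal} (the Perron eigenvalue is exactly the target abscissa) and~$\hat J-\diag(v^*)$ is Metzler and irreducible, Proposition~\ref{prop:strom_contraction} (applied to the closed-loop system, whose~$\hat J$ is~$\hat J-\diag(v^*)$) gives a monotonic norm~$|\cdot|_0$ with~$\mu(J(t,x,v^*))\le\mu_0(\hat J-\diag(v^*))=\alpha(\hat J-\diag(v^*))=-\eta$, using the irreducibility clause that allows~$\varepsilon=0$. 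This exhibits the required~$\mu$ and proves feasibility.

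The main obstacle — really the only non-mechanical point — is the sign conventions and the~$\R^m$ versus~$\R_{>0}^m$ discrepancy between the two optimization problems; once one sees that~$\eta>0$ here corresponds to target abscissa~$-\eta$ and that statement~3) of Theorem~\ref{thm:olshevsky_stab_bal} makes the positivity constraint non-binding, the rest is a direct substitution into Theorems~\ref{thm:diag_balance},~\ref{thm:olshevsky_stab_bal} and Proposition~\ref{prop:strom_contraction}. A secondary point worth a sentence is checking that~$\hat J-\diag(v^*)$ inherits irreducibility from~$\hat J$, which is immediate since subtracting a diagonal matrix does not change the off-diagonal zero pattern.
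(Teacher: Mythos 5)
Your proposal is correct and follows exactly the route the paper intends: the paper gives no explicit proof of Theorem~\ref{thm:net_contract}, presenting it as the direct combination of Theorem~\ref{thm:diag_balance}, Theorem~\ref{thm:olshevsky_stab_bal} (with target abscissa $-\eta$), and Proposition~\ref{prop:strom_contraction}, which is precisely your substitution. Your handling of the sign flip, of the $\R^m$ versus $\R_{>0}^m$ discrepancy via statement~3) of Theorem~\ref{thm:olshevsky_stab_bal}, and of the preserved irreducibility needed for the $\varepsilon=0$ clause are correct details that the paper leaves implicit.
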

%%%%%%%%%%%%%%%%%%
Thm.~\ref{thm:net_contract} shows that finding      local controllers   guaranteeing  that~\eqref{eq:network} is contracting with rate~$\eta$ can be done by diagonally balancing an upper-bound of the reduced order Jacobian of the system. Since diagonal  balancing can be solved  efficiently, this approach  is useful even in the case of large-scale systems.

In certain cases, Thm.~\ref{thm:net_contract} may in fact yield the optimal solution to~\eqref{eq:opt_contract}. Consider the networked system~\eqref{eq:network} and suppose that all subsystems are scalar  i.e.,~$n_i = 1$ for all~$i$. If there exists~$x \in \Omega$ such that~$J(x) = \hat J$ and $\hat J$ is irreducible, then~$v^*$ in~\eqref{eq:opt_controller_contract} gives the minimal controller which guarantees contraction with rate $\eta$ under \emph{any} constant norm. Indeed, a necessary condition for contraction with respect to a constant norm in such systems  is that~$\hat J - \diag(u)$ is Hurwitz. Thm.~\ref{thm:olshevsky_stab_bal} guarantees that the optimal controller stabilizing~$\hat J$ is~$ v^*$ in~\eqref{eq:opt_controller_contract}, and Prop.~\ref{prop:strom_contraction} guarantees that this controller also achieves contraction. Note that a special case of such a system is an irreducible positive~LTI system.

\begin{Example}
Consider the network system~\eqref{eq:network}, and suppose that $\frac{\partial f^i}{\partial x^j} \equiv 0$ for all $i,j$ such that $|i-j|>1$, so~$\hat J$ is a tridiagonal Metzler matrix. Assume in addition that $\hat J$ is irreducible, and let~$\eta > 0$ be such that $\hat J + \eta I_m \ge 0$. By Prop.~\ref{prop:tridiag_balance} and Thm.~\ref{thm:net_contract}, the optimal  controller~$v^*  $ for~\eqref{eq:suboptimal_contract} is %%%
\[
    v^*_i = \eta +\hat J_{ii}+ \begin{cases}
      \sqrt{\hat J_{1,2} \hat J_{2,1}}, & i = 1, \\
      \sqrt{\hat J_{i,i+1} \hat J_{i+1,i}} + \sqrt{\hat J_{i-1,i} \hat J_{i,i-1}}, & 1<i<m, \\
          \sqrt{\hat J_{m-1,m} \hat J_{m,m-1}}, & i = m,
    \end{cases}
\]
and~$v^* \in \R_{>0}^m$.
%%%
This can be explained as follows. 
The optimal control~$v^*_i$  amounts to ``canceling'' the diagonal term~$\hat J_{ii}$ and also ``canceling'' the effect of its four ``neigbours'', and then subtracting~$\eta$.
\end{Example}
 
%%%%%%%%%%%%%%%%
\section{An application}\label{sec:appli}
%%%%%%%%%%%%%%
We apply our approach   to design local controllers in a network of  
 FHN neurons that was studied using hierarchical  contraction in~\cite{Russo2013hier_contraction}.
The FHN model is a simplified 2D version of the detailed Hodgkin–Huxley model 
for the 
activation and deactivation dynamics of a spiking neuron.
We derive sufficient conditions under which the network is contractive, and thus entrains to periodic inputs. This application in fact shows that our approach does not necessarily require 
considering a Metzler matrix based  on  hierarchical  contraction, but can also be applied in other cases.

The network  consists of~$N$ neurons, each modeled according to the FHN model
\begin{equation}\label{eq:fi_neuron}
\begin{aligned}
    \dot v_i &= c \left(v_i + w_i - \frac{1}{3}v_i^3 + r(t) \right) + h_i(v), \\
    \dot w_i &= - (v_i - a + b w_i)/c,
\end{aligned}
\end{equation}
for $i = 1,\dots,N$, where $v_i$ denotes the membrane voltage, $w_i$ is a recovery variable, $r(t)$ is an external input current, and~$v:=\begin{bmatrix}
v_1&\dots&v_N\end{bmatrix}^T$. Here~$a,b\geq 0$ and~$c>0$. The function~$h_i(v)$ describes a connection  term:
\begin{equation}\label{eq:hivterm}
    h_i(v) = \gamma \sum_{j\in\mathcal{N}_i}(v_j - v_i) - \ell_i v_i,
\end{equation}
where~$\gamma>0$, $\mathcal{N}_i$ is the set of neighbours of neuron~$i$, and~$\ell_i>0$ is the gain of an additional local control term, which we will determine next such that the network is contractive. 

Let~$x:=\begin{bmatrix}
    v_1&\dots&v_N&w_1&\dots& w_N
\end{bmatrix}^T$. Then the 
Jacobian  of the dynamics 
is 
  %%%%%%%
\begin{equation}
    J(x) = \begin{bmatrix}
        J^{11}(v) - \diag(\ell) & cI_N \\
        - I_N /c & -bI_N/c
    \end{bmatrix},
\end{equation}
where~$J^{11}(v) := c I_N - c (\diag(v))^2 - \gamma L$, and~$L\in\R^{N \times N}$ is the Laplacian of the graph describing the interactions between the neurons, that is,
\[
    L_{ij} = \begin{cases}
        |\mathcal{N}_i| , & i = j, \\
        -1               , & i \neq j \text{ and } j\in \mathcal{N}_i, \\
        0               , & \text{otherwise}.
    \end{cases}
\]
The matrix~$J(x)$ is not Metzler, but rather than applying Thm.~\ref{thm:net_contract} at this point, we will use the fact that $J(x)$ can be transformed to a  skew-symmetric form to guarantee  contraction under a scaled~$L_2$ norm. Let
$
    T := \begin{bmatrix}
        I_N & 0 \\
        0 & cI_N
    \end{bmatrix},
$
and define a scaled~$L_2$ norm by:~$|x|_{2,T} := |Tx|$. Then,
\begin{align*}
    \mu_{2,T}(J ) &= \mu_2(T J T^{-1}) \\
    &= \mu_2\left(\begin{bmatrix}
        J^{11}(v) - \diag(\ell) & I_N \\
        -I_N & - {b} I_N/c
    \end{bmatrix}\right) \\
    &= \mu_2\left(\begin{bmatrix}
        S(v) - \diag(\ell) & 0 \\
        0 & - {b} I_N/c
    \end{bmatrix}\right) \\
    &= \max\{\mu_2(S(v)- \diag(\ell)), - {b}/ c \},
\end{align*}
where~$S(v):=(J^{11}(v)+(J^{11}(v))^T)/2$ is the symmetric part of~$J^{11}(v)$. 
Since~$S(v)$ is Metzler for any~$v$,   Prop.~\ref{prop:measure_monotone}  gives  
\[
    \mu_2(S(v)) \le \mu_2(\hat J^{11}),\text{ for all } v,
\]
where~$\hat J^{11} := S(0)= cI_N - \gamma (L + L^T)/2$.
Therefore, a sufficient condition for contraction with rate~$\eta \in [0, {b}/{c}]$ w.r.t. the scaled~$L_2$ norm $|\cdot|_{2,T}$ is that $\mu_2(\hat J^{11}) = -\eta$. Furthermore, since $\hat J^{11}$ is symmetric, $\mu_2(\hat J^{11}) = \alpha(\hat J^{11})$. For any~$\eta \ge \gamma\max_i \{L_{ii}\} - c$, the matrix~$\hat J^{11} + \eta I_N$ is non-negative, so by Thms.~\ref{lem:olshevsky_margin_stab} and~\ref{thm:net_contract} the minimal~$\ell$ guaranteeing that~$\hat J^{11}$
is Hurwitz with~$\alpha(\hat J^{11}) = -\eta$ (and thus the network is contractive with rate~$\eta$) is %%%%%%%%%%
\begin{align}\label{eq:fhn_opt}
    \ell^* &= (c+\eta) \mathbbm{1}_N - \frac{\gamma}{2} (L+L^T) \mathbbm{1}_N  \nonumber \\
  & = (c+ \eta) \mathbbm{1}_N - \frac{\gamma}{2} L^T \mathbbm{1}_N  .
\end{align}
%%%%%%%
Note that for any~$i$ the required
control effort~$\ell_i^*$ decreases with~$\gamma \sum_{j\neq i} (L_{ij} - L_{ji})$ (i.e., when the connections are stronger or when neuron~$i$ is fed by more neurons or feeds less neurons).
The control effort increases with~$c$, as a larger~$c$ means that~\eqref{eq:fi_neuron} is less stable, and with the required rate of contraction~$\eta$. Also, if the interconnection is symmetric  then 
\be\label{eq:lstar_homog}
\ell^* = (c + \eta)\mathbbm{1}_N,
\ee
so the optimal controller is independent of the network topology. This is not surprising, as~$-L$ is marginally stable if the network is symmetric, so all that is needed to stabilize the system is to ``cancel'' the unstable effect   of~$c$. When~\eqref{eq:lstar_homog} 
holds, \eqref{eq:fi_neuron}, 
and~\eqref{eq:hivterm} 
imply that the diagonal set~$\{x: v_i=v_j , w_i=w_j \text{ for all } i,j\}$ is an  invariant set of the closed-loop network. 
Since the network is also contractive, 
this implies that the neurons do not only entrain, but also synchronise, that is, 
$
 v(t) \to  
  \beta_1(t) 1_N 
$
and
$w(t)\to
  \beta_2(t) 1_N 
$,
where every~$\beta_i(t)$ is a scalar~$T$-periodic function. 

Fig.~\ref{fig:entrainment} depicts  the membrane voltage of three of the neurons (to avoid cluttering) in the non-symmetric  network of six neurons used in~\cite[Fig.~1]{Russo2013hier_contraction} with 
the~$T$-periodic input~$r(t) = 4 + 4\sin(2\pi t)$, for~$T=1$, and the controller~$\ell^*$   in~\eqref{eq:fhn_opt}. It may be seen that all the neurons entrain to the periodic input. 

\begin{figure}
    \centering
    \includegraphics[scale=0.8]{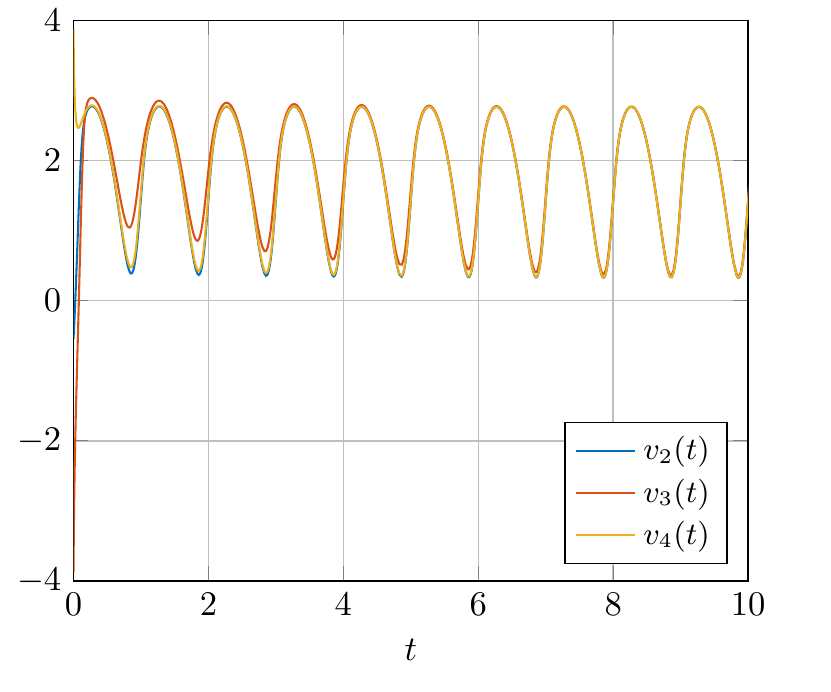}
    \caption{Entrainment  in a network of   FHN neurons with the 
     interconnection topology in~\cite{Russo2013hier_contraction}. The parameters are $a=0,b=2,c=6,\gamma=0.05,\eta=0.05$, and~$\ell^*$ as in~\eqref{eq:fhn_opt}.}
    \label{fig:entrainment}
\end{figure}

\section{Conclusion}
%%%%%%%%%%%%%%%%%%%%%%%%%%%5
We considered  the problem of efficiently designing local controllers which guarantee that a large-scale network   becomes contractive, while keeping the total control effort minimal.
We addressed this problem by first 
attaining a constant Metzler  matrix~$B$ 
such that making~$B$ Hurwitz implies contractivity of the network, and then 
using an efficient algorithm, based on matrix balancing, for determining the minimal diagonal perturbation making~$B$ Hurwitz~\cite{Ma2022optimal}.

Matrix balancing is a well studied topic with many 
generalizations~\cite{ideal2016,Eaves1985}. It may  be interesting   to use this to derive more
general versions of the optimization problem~\eqref{eq:optim_stab}. 
Another direction for further research is 
to consider generalized versions of~\eqref{eq:opt_contract} which require contraction with respect to a space- and time-dependent norm, rather than a  constant norm.
%%%%%%%%%%%%%%%%%%%%%%%%%%%%%%%%%%%%%%%%%%
\subsection*{Appendix: Proof of Thm.~\ref{thm:diag_balance}}
%%%%%%%%%%%%%%%%%%%%%
First,~$f(d)$ is homogeneous of degree zero, i.e., $f(c d) = f(d)$ for any $c > 0$, so we may restrict our attention to the set~$\osimplex := \{d \in \R_{>0}^n \st \sum_i d_i = 1\}$. Consider the optimization problem
\begin{equation}
\begin{aligned}
    \min_{d \in \R_{>0}^n} \quad & f(d), \\
    \mathrm{s.t.} \quad & \sum_i d_i = 1.
\end{aligned}
\end{equation}
%%%%%%%%%%%%%%%%%%%
Fix a vector~$\mathring{d}\in \osimplex$  satisfying  that the set~$Z$   of indexes~$i$ such that~$\mathring{d}_i=0$ is not empty.    Then~$\bar Z :=\{1,\dots,n\} \setminus Z$   is also  non empty. Since~$A$ is irreducible, there exist~$i \in Z$ and~$j \in \bar Z$ such that~$a_{ij} > 0$. Then
$
    f(d) \ge \trace(A) + a_{ij} {d_j} d_i^{-1},
$
so~$\lim_{d \to \mathring{d}} f(d) = \infty$. Since~$f$ is continuous in~$\osimplex$, it attains a minimal value there. This proves the assertion in~1).
  
To prove the second assertion, note that since $d\in\R^n_{>0}$, we can define a vector~$g\in \R^n$ by~$g_i := \ln(d_i), i=1,\dots,n$. Then~\eqref{eq:minfd} can be rewritten as 
    \begin{equation}\label{eq:optim_balance_conv}
        \min_{g \in \R^n} \tilde{f}(g),
    \end{equation}
where
    \begin{align*}
        \tilde{f}(g): &= \mathbbm{1}_n^T \exp(-\diag(g)) A \exp(\diag(g)) \mathbbm{1}_n\\
       & = \sum_{i,j} a_{ij} \exp(g_j-g_i)  
    % \\  &= \sum_i a_{ii} + \sum_{i\neq j} a_{ij}\exp(g_j-g_i).
    \end{align*}
Since~$a_{ij} \ge 0$ for any~$i \neq j$,  $\tilde f$ is   a sum of convex functions, so it is convex. Therefore,~\eqref{eq:optim_balance_conv} is convex and  unconstrained, so the minimum is achieved at any point~$g^*$ where the gradient~$\frac{\partial }{\partial g} \tilde f (g^*)$ vanishes. This is equivalent to~$\exp(-\diag(g^*))  A\exp( \diag(g^*))
=(\diag(d^*))^{-1} A \diag(d^*)
$ being a  balanced matrix.
    
To prove the third assertion, let~$p,q\in\R_{>0}^n $, with~$p \not = q$,  be two minimizers of~\eqref{eq:optim_balance_conv}. Define~$v(\varepsilon):=\varepsilon p+(1-\varepsilon)q$. Then
    \begin{align*}
        \frac{  d^2 }{d \varepsilon^2}\tilde f(v(\varepsilon)) &= \sum_{i\neq j} 
         a_{ij} ( p_j-q_j+q_i-p_i )^2  \exp(
         v_j(\varepsilon)-v_i(\varepsilon)) \\
         &=\sum_{ i\neq j} 
         a_{ij} ( r_j-r_i )^2  \exp(
         v_j(\varepsilon)-v_i(\varepsilon))\\
         &\geq 0 ,  
    \end{align*}
where~$r:=p-q$.
If~$ \sum_{i \neq j} 
a_{ij} ( r_j-r_i )^2>0$ then~$\frac{  d^2 }{d \varepsilon^2}\tilde f(v(\varepsilon))>0$ for any~$\varepsilon>0$, so~$\tilde f(v(1/2))< \tilde f(v(0))$ which is a contradiction. We conclude that
    \be\label{eq:allzerp}
    \sum_{i,j} 
         a_{ij} ( r_j-r_i )^2=0. 
    \ee
Hence,  there exists a set of indexes~$I\subseteq   \{1,\dots,n\}  $, with~$ |I|\geq 2$,
such that~$r_{i_1}=r_{i_2}$ for any~$i_1,i_2 \in I$, and~$r_{i}\not = r_{j} $ for any~$i\in I,j\in\bar I:=\{1,\dots,n\}\setminus I  $. 
Suppose that~$\bar I$ is not empty. Since~$A$ is irreducible and Metzler, there exist~$i\in I$ and~$j\in \bar I$ such that~$a_{ij}>0$, and this contradicts~\eqref{eq:allzerp}.
Thus,~$I=\{1,\dots,n\}$. Hence, if~$p\not = q$ are two minimizers of~$\tilde{ f}$ then~$p=q+c \mathbbm{1}_n$ for some~$c\not =0$. 
This completes the proof of Thm.~\ref{thm:diag_balance}.
%%%%%%%%%%%%%%%%%%%%%%%%
\subsection*{Acknowledgements} We thank Rami Katz and Chengshuai Wu for helpful comments.

%\bibliographystyle{IEEEtran}
%\bibliography{literature}
%%%%%%%%%%%%%%%%%%%%%%%%%%%%%%%%%%%%%%%
% Generated by IEEEtran.bst, version: 1.14 (2015/08/26)

\end{document}